\documentclass[reqno]{amsart}

\usepackage{amsmath,amssymb,amsthm,amsfonts,mathrsfs}
\usepackage{fullpage}
\usepackage{xcolor}
\usepackage{graphicx}
\usepackage{listings}
\usepackage{bbm}

\definecolor{codegreen}{rgb}{0,0.6,0}
\definecolor{codegray}{rgb}{0.5,0.5,0.5}
\definecolor{codepurple}{rgb}{0.58,0,0.82}
\definecolor{backcolour}{rgb}{0.95,0.95,0.92}

\lstdefinestyle{mystyle}{
  backgroundcolor=\color{backcolour},   commentstyle=\color{codegreen},
  keywordstyle=\color{magenta},
  numberstyle=\tiny\color{codegray},
  stringstyle=\color{codepurple},
  basicstyle=\ttfamily\footnotesize,
  breakatwhitespace=false,         
  breaklines=true,                 
  captionpos=b,                    
  keepspaces=true,                 
  numbers=left,                    
  numbersep=5pt,                  
  showspaces=false,                
  showstringspaces=false,
  showtabs=false,                  
  tabsize=2
}

\lstset{style=mystyle}
\usepackage{listings}
\usepackage[colorlinks=true,
linkcolor=blue,
anchorcolor=blue,
citecolor=red
]{hyperref}
\allowdisplaybreaks 
\newtheorem{theorem}{Theorem}[section]
\newtheorem{lemma}[theorem]{Lemma}

\newtheorem*{conjecture*}{Conjecture}
\theoremstyle{definition}

\theoremstyle{remark}

\newtheorem*{remark*}{remark}

\usepackage{colonequals}



\author{Runbo Li}
\address{International Curriculum Center, The High School Affiliated to Renmin University of China, Beijing, China}
\email{runbo.li.carey@gmail.com}


\makeatletter
\@namedef{subjclassname@2020}{\textup{}2020 Mathematics Subject Classification}
\makeatother

\title[]{On almost primes in Piatetski--Shapiro sequences}
\subjclass[2020]{11N35, 11N36} 
\keywords{prime, sieve methods, Piatetski--Shapiro sequences}

\begin{document}
	
\begin{abstract}
The author proves that for $0.9985 < \gamma < 1$, there exist infinitely many primes $p$ such that $[p^{1/\gamma}]$ has at most 5 prime factors counted with multiplicity. This gives an improvement upon the previous results of Banks--Guo--Shparlinski and Xue--Li--Zhang.
\end{abstract}

\maketitle

\tableofcontents

\section{Introduction}
Let $P_r$ denotes an integer with at most $r$ prime factors counted with multiplicity, $p$ denotes a prime and $\gamma \in (0, 1)$. The Piatetski--Shapiro sequences are sequences of the form
\begin{equation}
\mathscr{N}_{\gamma} = \left\{[n^{1/\gamma}]: n \in \mathbb{N}^{+} \right\}.
\end{equation}
In 1953, Piatetski--Shapiro \cite{PS1211} established that there are infinitely many primes in $\mathscr{N}_{\gamma}$ if $\gamma \in \left(\frac{11}{12}, 1\right)$. This range of $\gamma$ has been improved by many authors, and the best record now is due to Li , where he showed that the same result holds for any $\gamma \in \left(\frac{775}{919}, 1\right)$.

In 1987, Balog \cite{Balog89} considered the following subsequence of $\mathscr{N}_{\gamma}$:
\begin{equation}
\mathscr{P}_{\gamma} = \left\{[p^{1/\gamma}]: p \text{ prime} \right\}.
\end{equation}
He showed that for almost all $\gamma \in (0, 1)$, we have
\begin{equation}
\limsup_{x \to \infty} \frac{\sum_{\substack{q \leqslant x \\ q \in \mathscr{P}_{\gamma} \\ q \text{ prime} }} 1 }{x^{\gamma} \gamma^{-1} (\log x)^{-2}} \geqslant 1,
\end{equation}
but this result gives no information for any specific choice of $\gamma$.

In 2016, Banks, Guo and Shparlinski \cite{BanksGuoShparlinski} generalized and enhanced Balog's result. They showed explicitly that for every $\gamma \in (0,1) \backslash \left\{\frac{1}{z}: z \in \mathbb{Z}^{+} \right\}$, there exist infinitely many $q \in \mathscr{P}_{\gamma}$ such that $q = P_{R(\gamma)}$ for some finite $R(\gamma)$. As a part of their result, they showed that there are infinitely many $q \in \mathscr{P}_{\gamma}$ with $q = P_8$ for any $\gamma \in \left(0.9505, 1\right)$. In 2024, Xue, Li and Zhang \cite{XueLiZhang} improved this to $q = P_7$ for any $\gamma \in \left(0.989, 1\right)$. In this paper, we shall further improve this to $q = P_5$ when $\gamma$ is near 1.

\begin{theorem}\label{t1}
There are infinitely many $q \in \mathscr{P}_{\gamma}$ with $q = P_5$ for any $\gamma \in \left(0.9985, 1\right)$. Moreover, the following estimates
$$
\sum_{\substack{q \leqslant x \\ q \in \mathscr{P}_{\gamma} \\ \Omega(q) \leqslant 5 }} 1 \gg \frac{x^{\gamma}}{(\log x)^2}
$$
holds for all sufficiently large $x$ provided that $\gamma \in \left(0.9985, 1\right)$.
\end{theorem}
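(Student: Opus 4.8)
The plan is to reformulate the problem as a weighted linear sieve and then to establish the level of distribution it requires through estimates for exponential sums over primes with a Piatetski--Shapiro phase.

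\textbf{Setting up the sieve.} Put $N = x^{\gamma}$ and consider the (multi)sequence $\cA = \bigl([p^{1/\gamma}]\bigr)_{N/2 < p \leq N}$; its entries are positive integers $\leq x$, and since the interval $I_n = [\,n^{\gamma},(n+1)^{\gamma})$ has length $\asymp n^{\gamma-1} \to 0$, distinct primes in this range yield distinct values $[p^{1/\gamma}]$, whence
$$
\#\bigl\{\, q \leq x : q \in \mathscr{P}_{\gamma},\ \Omega(q) \leq 5 \,\bigr\} \;\geq\; \#\bigl\{\, n \in \cA : \Omega(n) \leq 5 \,\bigr\},
$$
and one has $|\cA| \asymp N/\log N \asymp x^{\gamma}/\log x$. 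The aim is to bound the right-hand side below by a linear ($\kappa = 1$) weighted sieve applied to $\cA$, with sifting function $g(p) = 1/p$. The essential input is a level-of-distribution estimate: an exponent $\theta = \theta(\gamma)$ for which, for every fixed $A$,
$$
\sum_{d \leq x^{\theta}} \bigl|\, |\cA_d| - \tfrac{1}{d}|\cA| \,\bigr| \;\ll_A\; \frac{|\cA|}{(\log x)^{A}}, \qquad |\cA_d| := \#\{\, N/2 < p \leq N : d \mid [p^{1/\gamma}] \,\}.
$$

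\textbf{The level of distribution (the crux).} With $\psi(t) = t - [t] - \tfrac12$, the elementary identity $\mathbbm{1}[\,d \mid m\,] = [m/d] - [(m-1)/d]$ applied to $m = [p^{1/\gamma}]$ (using $[\,[y]/d\,] = [y/d]$ and $[y]-1 = [y-1]$) gives
$$
\mathbbm{1}\bigl[\,d \mid [p^{1/\gamma}]\,\bigr] \;=\; \frac{1}{d} \;-\; \psi\!\Bigl(\frac{p^{1/\gamma}}{d}\Bigr) \;+\; \psi\!\Bigl(\frac{p^{1/\gamma}-1}{d}\Bigr),
$$
so the remainder $|\cA_d| - \tfrac1d|\cA|$ reduces to sums of the shape $\sum_{N/2 < p \leq N} \psi(p^{1/\gamma}/d)$. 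Approximating $\psi$ by a Vaaler trigonometric polynomial of degree $H$ (a suitable power of $N$) reduces these in turn to the exponential sums $\sum_{N/2 < p \leq N} e\bigl(h p^{1/\gamma}/d\bigr)$, $e(t) := e^{2\pi i t}$, which I would treat by a Vaughan-type decomposition of the von Mangoldt function into Type~I sums $\sum_{m \sim M}\sum_{n} e\bigl(h(mn)^{1/\gamma}/d\bigr)$ and Type~II bilinear sums $\sum_{m \sim M} a_m \sum_{n \sim N/M} b_n\, e\bigl(h(mn)^{1/\gamma}/d\bigr)$, estimating each by van der Corput's method and exponent pairs (for the Type~II sums, after Cauchy--Schwarz, supplemented by higher‑dimensional estimates of Robert--Sargos, Fouvry--Iwaniec and Heath-Brown type). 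Writing $p^{1/\gamma} = p^{1+\delta}$ with $\delta = (1-\gamma)/\gamma$, the phase is a small perturbation of a linear one; after optimizing the ranges of $M$ and the Vaughan and Vaaler parameters one obtains an explicit admissible $\theta(\gamma)$, larger the closer $\gamma$ is to $1$.

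\textbf{Conclusion via the weighted sieve.} Feeding the sifting data ($g(p) = 1/p$, dimension $\kappa = 1$, level $x^{\theta(\gamma)}$) into a weighted linear sieve — the Richert weights, or the sharper versions of Greaves and of Diamond--Halberstam--Richert, possibly combined with Iwaniec's well-factorable variant so as to exploit the full Type~I and Type~II levels — yields
$$
\#\bigl\{\, n \in \cA : \Omega(n) \leq R \,\bigr\} \;\gg\; \frac{|\cA|}{\log x} \;\asymp\; \frac{x^{\gamma}}{(\log x)^{2}}
$$
for every $R$ above a threshold that decreases as $\theta$ grows; already the plain Rosser--Iwaniec lower-bound sieve admits $R = 5$ once $\theta > 1/3$. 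It then remains to check that the estimate of the previous paragraph furnishes $\theta(\gamma)$ above the relevant threshold exactly when $\gamma > 0.9985$; together with the first display this proves both assertions of Theorem~\ref{t1}, and letting $x \to \infty$ gives the infinitude of such $q$.

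\textbf{Main obstacle.} The whole result stands or falls on the level-of-distribution step, and within it on the Type~II exponential sums $\sum_{m} a_m \sum_{n} b_n\, e\bigl(h(mn)^{1/\gamma}/d\bigr)$: the efficiency of the power savings available for these, uniformly for $d$ up to $x^{\theta}$ and $1 \leq h \leq H$, is what controls how large $\theta(\gamma)$ may be taken and hence how close $\gamma$ must be to $1$. Pushing the admissible range down to $\gamma > 0.9985$ will require an essentially optimal combination of exponent-pair and van der Corput estimates, a good choice of the Vaughan parameters and of $H$, and a careful allocation of the sieve level between the Type~I and Type~II régimes.
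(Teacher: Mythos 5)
Your general frame (sift the sequence $\{[p^{1/\gamma}]\}$, get a level of distribution from exponential sums with a Piatetski--Shapiro phase, then apply a weighted linear sieve) is the right starting point, and it is essentially how the earlier results $P_8$ and $P_7$ were obtained. But your proposal has a genuine gap exactly at the step you defer: the claim that the level of distribution $\theta(\gamma)$ can be pushed ``above the relevant threshold'' (you cite $\theta>1/3$ for the plain Rosser--Iwaniec route to $R=5$) precisely when $\gamma>0.9985$. No such level is available, and you give no argument that would produce it. The exponential-sum technology (Vaaler/Vaughan/van der Corput, exponent pairs, Robert--Sargos type bilinear bounds) yields, even as $\gamma\to 1^-$, a level of roughly $x^{0.15}$: the paper works with $\xi=\frac{140\gamma-99}{270}<\frac{41}{270}\approx 0.152$, imported from Xue--Li--Zhang. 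With a level of that size, the plain weighted sieve (Richert weights, or Greaves/DHR refinements) gives only about $P_7$ --- which is exactly the previous record --- so your scheme cannot reach $P_5$, and the numerical threshold $0.9985$ is not something your argument could ever ``check''.

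What actually closes the gap in the paper is a different mechanism that is absent from your proposal. One forms the Richert-weighted sum $W(\mathcal{A},x^{1/17.41})$ with parameters $u=1/\xi+\varepsilon$, $\lambda=1/(9-u-\varepsilon)$, so that the weight is automatically negative when $\Omega(a)\geqslant 9$; the main term is bounded below by Iwaniec's linear sieve at level $x^{\xi}$; and then the contributions of squarefree $a$ with $\Omega(a)=6,7,8$ (all prime factors exceeding $x^{1/17.41}$) are removed by Chen's switching principle, i.e.\ by counting such products directly through the bilinear (Type~II) information rather than through the level of distribution. The combinatorial Lemmas~\ref{l21} and~\ref{l22} are the key new input: they guarantee that any admissible factorization into $6$ or $7$ (normalized) prime factors has a subproduct in the Type~II window $[x^{0.611797},x^{0.787393}]$, which is admissible because $5-5\gamma+4\xi<0.611797$ and $\frac14(\gamma+\xi+2)>0.787393$ for $\gamma>0.9985$; the resulting multiple integrals ($0.16688$, $0.02571$, $0.00259$) are then small enough that $W_1>0$. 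Without the switching step and these lemmas, your argument proves at best the already-known $P_7$, not Theorem~\ref{t1}.
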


Throughout this paper, we always suppose that $x$ is a sufficiently large integer, $\varepsilon$ and $\eta$ are sufficiently small positive numbers and $0 < \gamma < 1$. The letter $p$, with or without subscript, is reserved for prime numbers. Let $\xi = \frac{140 \gamma - 99}{270}$, $u = \frac{1}{\xi} +\varepsilon$ and $\lambda = \frac{1}{9 - u - \varepsilon}$. We define the set $\mathcal{A}$ as
$$
\mathcal{A} = \left\{a: a \leqslant x, a \in \mathscr{P}_{\gamma} \right\}
$$
and we put
$$
\mathcal{A}_d=\{a: a d \in \mathcal{A}\}, \quad P(z)=\prod_{p<z} p, \quad S(\mathcal{A}, z)=\sum_{\substack{a \in \mathcal{A} \\ (a, P(z))=1}} 1.
$$

\section{Two combinatorial lemmas}
In this section we shall prove two combinatorial lemmas which will be used to handle some error terms after performing Chen's switching principle on some sieve functions.

\begin{lemma}\label{l21}
For any positive numbers $t_1, t_2, t_3, t_4, t_5, t_6, t_7$ with 
$$
\frac{1}{17.41} \leqslant t_1 < t_2 < t_3 < t_4 < t_5 < t_6 < t_7 \quad \text{and} \quad t_1 + t_2 + t_3 + t_4 + t_5 + t_6 + t_7 = 1,
$$
There exists some $I \subset \{1, 2, 3, 4, 5, 6, 7\}$ such that
$$
0.611797 \leqslant \sum_{i \in I} t_i \leqslant 0.787393.
$$
\end{lemma}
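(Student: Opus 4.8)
My plan is to pass to the complementary interval and to the sequence of prefix sums. Set $c = 1 - 0.787393 = 0.212607$ and $d = 1 - 0.611797 = 0.388203$; since $\sum_{i=1}^{7} t_i = 1$, a set $I$ with $\sum_{i \in I} t_i \in [0.611797, 0.787393]$ exists if and only if its complement $J = \{1,\dots,7\} \setminus I$ satisfies $\sum_{i \in J} t_i \in [c, d]$. So it suffices to produce a subset whose sum lies in $[c, d]$. Since the $t_i$ are already increasing, I would look at the prefix sums $P_0 = 0$ and $P_k = t_1 + \dots + t_k$, which increase to $P_7 = 1$. If some $P_k \in [c, d]$ we are done; otherwise let $k$ be minimal with $P_k \geq c$, so that $P_{k-1} < c$ and, since $P_k \notin [c,d]$, also $P_k > d$.

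The core of the argument is to show that only $k = 4$ is possible. For $k = 1$ this is clear, as $P_1 = t_1 < 1/7 < d$. For $k = 2$, the two smallest of seven positive reals with sum $1$ satisfy $P_2 = t_1 + t_2 < 2/7 < d$, contradicting $P_2 > d$. For $k = 3$, from $t_3 < t_4 < \dots < t_7$ one gets $5 t_3 < t_3 + \dots + t_7 = 1 - P_2$, so $P_3 = P_2 + t_3 < \tfrac{1}{5}(1 + 4 P_2) < \tfrac{1}{5}(1 + 4c) < d$, impossible again. For $k \in \{5, 6, 7\}$, the hypothesis $t_1 \geq 1/17.41$ forces $P_{k-1} \geq (k-1)/17.41 \geq 4/17.41 > c$, contradicting $P_{k-1} < c$.

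It remains to treat $k = 4$, i.e.\ $P_3 = t_1 + t_2 + t_3 < c$ and $P_4 = P_3 + t_4 > d$. Here I would simply exhibit the two-element set $\{1,4\}$. For the lower bound, $t_4 = P_4 - P_3 > d - c$ and $t_1 \geq 1/17.41$, so $t_1 + t_4 > 1/17.41 + (d - c) > 0.233 > c$. For the upper bound, $3 t_1 < t_1 + t_2 + t_3 = P_3 < c$ gives $t_1 < c/3 < 0.0709$, and $4 t_4 < t_4 + t_5 + t_6 + t_7 = 1 - P_3 < 1$ gives $t_4 < 1/4$, whence $t_1 + t_4 < 0.321 < d$. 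Thus $t_1 + t_4 \in [c, d]$, so $I = \{2, 3, 5, 6, 7\}$ has $\sum_{i \in I} t_i = 1 - (t_1 + t_4) \in [0.611797, 0.787393]$, as required.

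The only place any real care is needed is in confirming the direction of each numerical comparison — $2/7 < d$, $\tfrac{1}{5}(1 + 4c) < d$, $4/17.41 > c$, and the two bounds $1/17.41 + (d - c) > c$ and $c/3 + 1/4 < d$ — which is exactly where the constants $0.611797$, $0.787393$, and $1/17.41$ enter; everything else is elementary and uses no number-theoretic input.
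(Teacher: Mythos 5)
Your proof is correct: the reduction to the complementary interval $[c,d]=[0.212607,0.388203]$ is legitimate because the $t_i$ sum to $1$, the prefix-sum dichotomy is sound, and each numerical comparison checks out ($P_2<2/7<d$; for $k=3$, $P_3<\tfrac{1}{5}(1+4c)=0.3700856<d$; for $k\geq 5$, $P_{k-1}\geq 4/17.41=0.22975\ldots>c$; and in the remaining case $k=4$, $t_1+t_4>1/17.41+(d-c)=0.2330\ldots>c$ while $t_1+t_4<c/3+1/4=0.3208\ldots<d$), so $I=\{2,3,5,6,7\}$ indeed works there. Your route is genuinely different in organization from the paper's. The paper argues directly on the target interval by splitting into cases according to the size of $t_4+t_5+t_6+t_7$, then descends through subcases choosing the sets $\{4,5,6,7\}$, $\{5,6,7\}$, $\{1,5,6,7\}$, and $\{1,2,3,4,5\}$, each step requiring fresh ad hoc bounds; your argument instead passes to complements and scans the increasing prefix sums $P_k$, which kills all cases except $k=4$ almost automatically and isolates the single nontrivial configuration, resolved by the pair $\{1,4\}$ (equivalently the set $\{2,3,5,6,7\}$, which does not appear in the paper). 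What your approach buys is economy and transparency: one uniform mechanism, fewer numerical verifications, and a clearer view of exactly where the constants $1/17.41$, $0.611797$, $0.787393$ are needed; the paper's direct case analysis, on the other hand, parallels more closely the structure of its companion Lemma~\ref{l22} (the six-variable analogue), where the same style of argument is reused.
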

\begin{proof}
Consider the following cases:

1. $0.611797 \leqslant t_4 + t_5 + t_6 + t_7 \leqslant 0.787393$. Take $I = \{4, 5, 6, 7\}$.

2. $t_4 + t_5 + t_6 + t_7 > 0.787393$. Now we have 
$$
t_1 + t_2 + t_3 < 0.212607.
$$
Since
$$
\frac{3}{17.41} \leqslant 3 t_1 < t_1 + t_2 + t_3,
$$
we have
$$
0.787393 < t_4 + t_5 + t_6 + t_7 < 1 - \frac{3}{17.41} < 0.82769.
$$
Now 
$$
\frac{1}{17.41} < t_4 < \frac{1}{4}(t_4 + t_5 + t_6 + t_7) = 0.2069225,
$$
we have
$$
0.5804705 = 0.787393 - 0.2069225 < t_5 + t_6 + t_7 < 0.82769 - \frac{1}{17.41} < 0.770252.
$$
If $0.611797 \leqslant t_5 + t_6 + t_7 < 0.770252$, take $I = \{5, 6, 7\}$. Otherwise we have
$$
0.5804705 < t_5 + t_6 + t_7 < 0.611797.
$$
Since
$$
0.05743 < \frac{1}{17.41} \leqslant t_1 < \frac{1}{7}(t_1 + t_2 + t_3 + t_4 + t_5 + t_6 + t_7) = \frac{1}{7} < 0.143,
$$
we have
$$
0.6379005 = 0.05743 + 0.5804705 < t_1 + t_5 + t_6 + t_7 < 0.611797 + 0.143 = 0.754797.
$$
Take $I = \{1, 5, 6, 7\}$.

3. $t_4 + t_5 + t_6 + t_7 < 0.611797$. Now we have
$$
\frac{4}{17.41} < t_4 + t_5 + t_6 + t_7 < 0.611797
$$
and
$$
0.388203 = 1 - 0.611797 < t_1 + t_2 + t_3.
$$
Hence
$$
0.1294 < \frac{0.388203}{3} < \frac{1}{3}(t_1 + t_2 + t_3) < t_3 < t_4 < t_5.
$$
Note that
$$
t_1 + t_2 + t_3 < \frac{1}{2}(t_1 + t_2 + t_3 + t_4 + t_5 + t_6) < \frac{1}{2} \times \frac{6}{7} = \frac{3}{7} < 0.4286
$$
and
$$
t_4 + t_5 < \frac{1}{2}(t_4 + t_5 + t_6 + t_7) < \frac{0.611797}{2} < 0.3059,
$$
we have
$$
0.647 < 0.388203 + 2 \times 0.1294 < t_1 + t_2 + t_3 + t_4 + t_5
$$
and
$$
t_1 + t_2 + t_3 + t_4 + t_5 < 0.4286 + 0.3059 = 0.7345.
$$
Take $I = \{1, 2, 3, 4, 5\}$.

Combining all above cases, Lemma~\ref{l21} is proved.
\end{proof}

\begin{lemma}\label{l22}
For any positive numbers $t_1, t_2, t_3, t_4, t_5, t_6$ with 
$$
\frac{1}{17.41} \leqslant t_1 < t_2 < t_3 < t_4 < t_5 < t_6 \quad \text{and} \quad t_1 + t_2 + t_3 + t_4 + t_5 + t_6 = 1,
$$
There exists some $I \subset \{1, 2, 3, 4, 5, 6\}$ such that
$$
0.611797 \leqslant \sum_{i \in I} t_i \leqslant 0.787393.
$$
\end{lemma}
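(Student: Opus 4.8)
The plan is to recast the statement by complementation and then run a short greedy argument on partial sums, which sidesteps the branching case analysis one gets by imitating the proof of Lemma~\ref{l21}. Set $\alpha = 0.212607$ and $\beta = 0.388203$, so $\beta - \alpha = 0.175596$ and the target interval $[0.611797, 0.787393]$ coincides with $[1-\beta, 1-\alpha]$. It then suffices to find $J \subseteq \{1,\dots,6\}$ with $\sum_{j\in J} t_j \in [\alpha,\beta]$, because $I = \{1,\dots,6\}\setminus J$ would satisfy $\sum_{i \in I} t_i = 1 - \sum_{j\in J} t_j \in [1-\beta, 1-\alpha]$.

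I would work with the partial sums $s_0 = 0$ and $s_j = t_1 + \cdots + t_j$ for $1 \leq j \leq 6$, noting $s_6 = 1$. If some $s_j$ happens to lie in $[\alpha,\beta]$, take $J = \{1,\dots,j\}$ and stop. Otherwise, since $s_0 < \alpha < \beta < s_6$, let $k$ be minimal with $s_k > \beta$; then $s_{k-1}\leq\beta$ by minimality, and $s_{k-1}\notin[\alpha,\beta]$ forces $s_{k-1} < \alpha$. From this one reads off: (i) $t_k = s_k - s_{k-1} > \beta-\alpha = 0.175596 > 1/6 \geq t_1$, so $k \geq 2$; (ii) $s_{k-1} \geq (k-1)t_1 \geq (k-1)/17.41$ and $s_{k-1} < 0.212607$ force $k \leq 4$ --- this is exactly where the hypothesis $t_1 \geq 1/17.41$ is used, to exclude $k = 5$ and $k = 6$; (iii) since $t_k \leq t_{k+1}\leq\cdots\leq t_6$ are $7-k$ numbers with sum $1 - s_{k-1} \leq 1 - (k-1)t_1$, we get $t_k \leq \bigl(1-(k-1)t_1\bigr)/(7-k)$.

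The choice that finishes it is $J = \{1,k\}$. Indeed $t_1 + t_k > 1/17.41 + 0.175596 = 0.233034 > \alpha$, while
\[
t_1 + t_k \;\leq\; t_1 + \frac{1-(k-1)t_1}{7-k} \;=\; \frac{1 + (8-2k)\,t_1}{7-k},
\]
and because $8-2k \geq 0$ for $k \in \{2,3,4\}$ and $t_1 \leq 1/6$, the right-hand side is at most $\tfrac13 < \beta$ in all three cases. Hence $t_1+t_k \in (\alpha,\beta)$, and $I = \{1,\dots,6\}\setminus\{1,k\}$ has $\sum_{i\in I} t_i = 1 - (t_1+t_k) \in (1-\beta, 1-\alpha) \subseteq [0.611797, 0.787393]$, as required.

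I do not expect a genuine obstacle: once the problem is turned around, everything reduces to confining $k$ to $\{2,3,4\}$ and checking the elementary bounds $\tfrac{1+4t_1}{5},\ \tfrac{1+2t_1}{4},\ \tfrac13 < 0.388203$ for $t_1 \leq 1/6$ together with $0.233034 > 0.212607$. The only delicate point is that $1/17.41$ must be small enough to keep $t_1 + t_k$ above $\alpha$ yet large enough to rule out $k \geq 5$ (were $k = 5$ possible, $t_1 + t_5$ could reach $\tfrac12 - t_1 \approx 0.44 > \beta$); $1/17.41$ sits precisely in that window. The same scheme, incidentally, also proves Lemma~\ref{l21}; if one prefers to parallel the proof of Lemma~\ref{l21} instead, one splits on the size of $t_4 + t_5 + t_6$ and, within the range $t_4+t_5+t_6 > 0.787393$, successively on $t_5 + t_6$ and $t_6$, which works but needs more sub-cases since six variables leave fewer small terms for fine-tuning a sum.
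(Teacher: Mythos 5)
Your proof is correct, and it is genuinely different from (and shorter than) the paper's argument. The paper proves Lemma~\ref{l22} by a direct exhaustive case analysis on the sizes of blocks of the largest variables: it splits on whether $t_4+t_5+t_6$ lies below, inside, or above the target interval, and within each branch tests further candidate sums ($t_5+t_6$, $t_1+t_2+t_3+t_4+t_5$, $t_1+t_4$, $t_2+t_4+t_5+t_6$, etc.), with three additional subcases in the hardest branch. You instead pass to the complementary interval $[\alpha,\beta]=[0.212607,0.388203]$ and run a first-crossing argument on the partial sums $s_j=t_1+\cdots+t_j$: either some $s_j$ already lands in $[\alpha,\beta]$, or the first index $k$ with $s_k>\beta$ satisfies $t_k>\beta-\alpha=0.175596$ (so $k\geq 2$, since $t_1<1/6$) and $s_{k-1}<\alpha$ together with $s_{k-1}\geq (k-1)/17.41$ forces $k\leq 4$ (as $4/17.41\approx 0.2298>\alpha$), after which $J=\{1,k\}$ works because $t_1+t_k>1/17.41+0.175596\approx 0.2330>\alpha$ and $t_1+t_k\leq\bigl(1+(8-2k)t_1\bigr)/(7-k)\leq 1/3<\beta$ for $k\in\{2,3,4\}$ and $t_1\leq 1/6$; I checked all these numerical bounds and they hold. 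What your route buys is economy and transparency: a single mechanism replaces the branching, the exact role of the threshold $1/17.41$ becomes visible (large enough to exclude $k\geq 5$, small enough to keep $t_1+t_k$ above $\alpha$), and, as you note, the identical computation with $8-k$ and $9-2k$ in place of $7-k$ and $8-2k$ disposes of Lemma~\ref{l21} as well, whereas the paper proves the two lemmas by separate case analyses. What the paper's style buys, arguably, is that each admissible sum is exhibited by explicit numerical comparisons that are easy to audit line by line, but your argument is equally elementary and easier to adapt if the endpoints $0.611797$ and $0.787393$ were to change.
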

\begin{proof}
Consider the following cases:

1. $0.611797 \leqslant t_4 + t_5 + t_6 \leqslant 0.787393$. Take $I = \{4, 5, 6\}$.

2. $t_4 + t_5 + t_6 > 0.787393$. Now we have
$$
\frac{3}{17.41} < t_1 + t_2 + t_3 < 1 - 0.787393 = 0.212607
$$
and
$$
0.787393 < t_4 + t_5 + t_6 < 1 - \frac{3}{17.41} < 0.8277.
$$
We also have
$$
t_5 + t_6 < 1 - \frac{4}{17.41} < 0.7703.
$$
If $0.611797 \leqslant t_5 + t_6 < 0.7703$, take $I = \{5, 6\}$. Otherwise we have
$$
t_5 + t_6 < 0.611797.
$$
Since
$$
0.787393 < t_4 + t_5 + t_6,
$$
we have
$$
0.175596 < t_4 < t_5.
$$
Note that
$$
t_4 + t_5 < \frac{2}{3}(t_4 + t_5 + t_6) < \frac{2}{3}0.8277 = 0.5518,
$$
we have
$$
0.351192 = 2 \times 0.175596 < t_4 + t_5 < 0.5518.
$$
Since
$$
0.1723 < \frac{3}{17.41} < t_1 + t_2 + t_3 < 0.212607,
$$
we have
$$
0.523492 = 0.1723 + 0.351192 < t_1 + t_2 + t_3 + t_4 + t_5 < 0.212607 + 0.5518 = 0.764407.
$$
If $0.611797 \leqslant t_1 + t_2 + t_3 + t_4 + t_5 < 0.764407$, take $I = \{1, 2, 3, 4, 5\}$. Otherwise we have
$$
0.523492 < t_1 + t_2 + t_3 + t_4 + t_5 < 0.611797,
$$
which means that
$$
0.388203 = 1 - 0.611797 < t_6 < 1 - 0.523492 = 0.476508.
$$
If there exists $i \in \{1,2,3,4,5\}$ such that $0.212667 \leqslant t_i \leqslant 0.388203$, then we can take $I = \{1, \cdots, 6, \text{ except } i\}$. Now we consider the following 3 subcases:

2.1. At least two of $t_1, t_2, t_3, t_4, t_5$ are larger than $0.388203$. Now we have
$$
1.16 < 3 \times 0.388203 < t_1 + t_2 + t_3 + t_4 + t_5 + t_6 = 1,
$$
which is a contradiction.

2.2. One of $t_1, t_2, t_3, t_4, t_5$ is larger than $0.388203$. Now we must have
$$
\frac{1}{17.41} \leqslant t_1 < t_2 < t_3 < t_4 < 0.212667 < 0.388203 < t_5.
$$
Hence
$$
t_1 + t_2 + t_3 + t_4 < 1 - (t_5 + t_6) < 1 - 2 \times 0.388203 = 0.223594.
$$
If $0.212667 \leqslant t_1 + t_2 + t_3 + t_4 < 0.223594$, take $I = \{5, 6\}$. Otherwise we have
$$
t_1 + t_2 + t_3 + t_4 < 0.212667.
$$
But since
$$
0.2297 < \frac{4}{17.41} < t_1 + t_2 + t_3 + t_4,
$$
we get
$$
0.2297 < t_1 + t_2 + t_3 + t_4 < 0.212667,
$$
which is a contradiction.

2.3. None of $t_1, t_2, t_3, t_4, t_5$ is larger than $0.388203$. Now we must have
$$
\frac{1}{17.41} \leqslant t_1 < t_2 < t_3 < t_4 < t_5 < 0.212667
$$
and
$$
0.351192 < t_4 + t_5.
$$
Hence
$$
\frac{3}{17.41} < t_1 + t_2 + t_3 < 1 - (t_4 + t_5 + t_6) < 1 - 0.351192 - 0.388203 = 0.260605.
$$
If $0.212667 \leqslant t_1 + t_2 + t_3 < 0.260605$, take $I = \{4, 5, 6\}$. Otherwise we have
$$
\frac{3}{17.41} < t_1 + t_2 + t_3 < 0.212667.
$$
Now we have
$$
\frac{1}{17.41} \leqslant t_1 < \frac{1}{3}(t_1 + t_2 + t_3) < 0.0709.
$$
Since
$$
0.175596 < t_4 < 0.212607,
$$
we have
$$
0.233 < \frac{1}{17.41} + 0.175596 < t_1 + t_4 < 0.0709 + 0.212607 < 0.3.
$$
Take $I = \{2, 3, 5, 6\}$.

3. $t_4 + t_5 + t_6 < 0.611797$. Now we have
$$
0.5 = \frac{1}{2}(t_1 + t_2 + t_3 + t_4 + t_5 + t_6) < t_4 + t_5 + t_6 < 0.611797
$$
and
$$
\frac{1}{17.41} \leqslant t_1 < \frac{1}{6}(t_1 + t_2 + t_3 + t_4 + t_5 + t_6) = \frac{1}{6}.
$$
Hence
$$
0.5574 < \frac{1}{17.41} + 0.5 < t_1 + t_4 + t_5 + t_6 < \frac{1}{6} + 0.611797 < 0.7784.
$$
If $0.611797 \leqslant t_1 + t_4 + t_5 + t_6 < 0.7784$, take $I = \{1, 4, 5, 6\}$. Otherwise we have
$$
0.5574 < t_1 + t_4 + t_5 + t_6 < 0.611797.
$$
Since
$$
\frac{2}{17.41} \leqslant t_1 + t_2 < \frac{1}{3}(t_1 + t_2 + t_3 + t_4 + t_5 + t_6) = \frac{1}{3},
$$
we have
$$
0.6148 < \frac{2}{17.41} + 0.5 < t_1 + t_2 + t_4 + t_5 + t_6 < \frac{1}{3} + 0.611797 < 0.94514.
$$
If $0.6148 < t_1 + t_2 + t_4 + t_5 + t_6 \leqslant 0.787393$, take $I = \{1, 2, 4, 5, 6\}$. Otherwise we have
$$
0.787393 < t_1 + t_2 + t_4 + t_5 + t_6 < 0.94514.
$$
Hence
$$
\frac{1}{17.41} < t_3 < 1 - 0.787393 = 0.212607.
$$
Note that
$$
t_1 + t_4 + t_5 + t_6 < 0.611797,
$$
we have
$$
0.175596 = 0.787393 - 0.611797 < t_2.
$$
Hence
$$
0.87798 = 5 \times 0.175596 < 5 t_2 < t_2 + t_3 + t_4 + t_5 + t_6
$$
and
$$
\frac{1}{17.41} \leqslant t_1 < 1 - 0.87798 = 0.12202.
$$
Since
$$
0.175596 < t_2 < t_3 < 0.212607,
$$
we have
$$
0.22 < \frac{1}{17.41} + 0.175596 < t_1 + t_3 < 0.12202 + 0.212607 < 0.34.
$$
Take $I = \{2, 4, 5, 6\}$.

Combining all above cases, Lemma~\ref{l22} is proved.
\end{proof}

\section{Proof of Theorem 1.1}
Now we follow the discussion in \cite{XueLiZhang}. Consider the following weighted sum
\begin{equation}
W\left(\mathcal{A}, x^{\frac{1}{17.41}} \right) = \sum_{\substack{ a \in \mathcal{A} \\ \left(a, P\left(x^{\frac{1}{17.41}}\right) \right)=1 }} \left(1 - \lambda \sum_{\substack{ x^{\frac{1}{17.41}} \leqslant p < x^{\frac{1}{u}} \\ p \mid a }}\left( 1-\frac{u \log p}{\log x} \right) \right) = \sum_{\substack{ a \in \mathcal{A} \\ \left(a, P\left(x^{\frac{1}{17.41}}\right) \right)=1 }} \mathscr{W}_a.
\end{equation}
We have
\begin{align}
\nonumber W\left(\mathcal{A}, x^{\frac{1}{17.41}} \right) =&\ \sum_{\substack{ a \in \mathcal{A} \\ \left(a, P\left(x^{\frac{1}{17.41}}\right) \right)=1 \\ \Omega(a) \leqslant 5 }} \mathscr{W}_a + \sum_{\substack{ a \in \mathcal{A} \\ \left(a, P\left(x^{\frac{1}{17.41}}\right) \right)=1 \\ \Omega(a) = 6 \\ \mu(a) \neq 0 }} \mathscr{W}_a + \sum_{\substack{ a \in \mathcal{A} \\ \left(a, P\left(x^{\frac{1}{17.41}}\right) \right)=1 \\ \Omega(a) = 7 \\ \mu(a) \neq 0 }} \mathscr{W}_a \\
\nonumber & + \sum_{\substack{ a \in \mathcal{A} \\ \left(a, P\left(x^{\frac{1}{17.41}}\right) \right)=1 \\ \Omega(a) = 8 \\ \mu(a) \neq 0 }} \mathscr{W}_a + \sum_{\substack{ a \in \mathcal{A} \\ \left(a, P\left(x^{\frac{1}{17.41}}\right) \right)=1 \\ 6 \leqslant \Omega(a) \leqslant 8 \\ \mu(a) = 0 }} \mathscr{W}_a + \sum_{\substack{ a \in \mathcal{A} \\ \left(a, P\left(x^{\frac{1}{17.41}}\right) \right)=1 \\ \Omega(a) \geqslant 9 }} \mathscr{W}_a \\
=&\ W_1 + W_2 + W_3 + W_4 + W_5 + W_6.
\end{align}
Our aim is to show that $W_1 > 0$.

Trivially, we have $\mathscr{W}_a < 0$ for $\Omega(a) \geqslant 9$. We also have $W_5 \ll x^{1-\frac{1}{17.41}+\varepsilon}$. Thus,
\begin{align}
\nonumber W_1 =&\ W\left(\mathcal{A}, x^{\frac{1}{17.41}} \right) - W_2 - W_3 - W_4 - W_5 - W_6 \\
\nonumber =&\ W\left(\mathcal{A}, x^{\frac{1}{17.41}} \right) - W_2 - W_3 - W_4 - W_6 + O\left(x^{1-10^{-10}}\right) \\
>&\ W\left(\mathcal{A}, x^{\frac{1}{17.41}} \right) - W_2 - W_3 - W_4 + O\left(x^{1-10^{-10}}\right).
\end{align}

By the same arguments as in \cite{XueLiZhang}, we have
\begin{align}
\nonumber W\left(\mathcal{A}, x^{\frac{1}{17.41}} \right) =&\ S\left(\mathcal{A}, x^{\frac{1}{17.41}} \right) - \lambda \sum_{ x^{\frac{1}{17.41}} \leqslant p < x^{\frac{1}{u}}}\left( 1-\frac{u \log p}{\log x} \right) S\left(\mathcal{A}_p, x^{\frac{1}{17.41}} \right) \\
\geqslant&\ (1+o(1)) \frac{2 \mathcal{C}(\omega) \pi\left(x^{\gamma}\right)}{17.41^2 \log x} \left(\frac{\log (17.41\xi -1)}{\xi} - \lambda \int_{u}^{17.41}\frac{t-u}{t(t \xi -1)} d t \right)
\end{align}
and
\begin{align}
\nonumber W_4 =&\ \sum_{\substack{ a \in \mathcal{A} \\ \left(a, P\left(x^{\frac{1}{17.41}}\right) \right)=1 \\ \Omega(a) = 8 \\ \mu(a) \neq 0 }} \left(1 - \lambda \sum_{\substack{ x^{\frac{1}{17.41}} \leqslant p < x^{\frac{1}{u}} \\ p \mid a }}\left( 1-\frac{u \log p}{\log x} \right) \right) \\
\nonumber \leqslant&\ \lambda \sum_{\substack{ a \in \mathcal{A} \\ \left(a, P\left(x^{\frac{1}{17.41}}\right) \right)=1 \\ \Omega(a) = 8 \\ \mu(a) \neq 0 }} 1 \\
\nonumber \leqslant&\ (1+o(1)) \frac{2 \mathcal{C}(\omega) \pi\left(x^{\gamma}\right)}{17.41^2 \log x}\left(\frac{\lambda \gamma}{\xi} \int_{\frac{1}{17.41}}^{\frac{1}{8}} \int_{t_1}^{\frac{1}{7}(1-t_1)} \int_{t_2}^{\frac{1}{6}(1-t_1-t_2)} \int_{t_3}^{\frac{1}{5}(1-t_1-t_2-t_3)} \int_{t_4}^{\frac{1}{4}(1-t_1-t_2-t_3-t_4)} \right. \\
\nonumber & \qquad \qquad \qquad \qquad \qquad \qquad \int_{t_5}^{\frac{1}{3}(1-t_1-t_2-t_3-t_4-t_5)} \int_{t_6}^{\frac{1}{2}(1-t_1-t_2-t_3-t_4-t_5-t_6)} \\
\nonumber & \left. \qquad \qquad \qquad \qquad \qquad \qquad \frac{1}{t_1 t_2 t_3 t_4 t_5 t_6 t_7 (1-t_1-t_2-t_3-t_4-t_5-t_6-t_7)} d t_7 d t_6 d t_5 d t_4 d t_3 d t_2 d t_1 \right) \\
\leqslant&\ (1+o(1)) \frac{2 \mathcal{C}(\omega) \pi\left(x^{\gamma}\right)}{17.41^2 \log x}\left(\frac{\lambda \gamma}{\xi} 0.00259 \right),
\end{align}
where $\mathcal{C}(\omega)$ is defined in [\cite{XueLiZhang}, (2.6)].

Note that we have
$$
5 - 5 \gamma + 4 \xi < 0.611797
$$
and
$$
\frac{1}{4}(\gamma + \xi + 2) > 0.787393
$$
when $0.9985 < \gamma < 1$. Then by Chen's switching principle, Iwaniec's linear sieve, Lemmas~\ref{l21}--\ref{l22} and similar arguments as in \cite{XueLiZhang}, we can obtain
\begin{align}
\nonumber W_3 =&\ \sum_{\substack{ a \in \mathcal{A} \\ \left(a, P\left(x^{\frac{1}{17.41}}\right) \right)=1 \\ \Omega(a) = 7 \\ \mu(a) \neq 0 }} \left(1 - \lambda \sum_{\substack{ x^{\frac{1}{17.41}} \leqslant p < x^{\frac{1}{u}} \\ p \mid a }}\left( 1-\frac{u \log p}{\log x} \right) \right) \\
\nonumber \leqslant&\ \lambda \sum_{\substack{ a \in \mathcal{A} \\ \left(a, P\left(x^{\frac{1}{17.41}}\right) \right)=1 \\ \Omega(a) = 7 \\ \mu(a) \neq 0 }} 1 \\
\nonumber \leqslant&\ (1+o(1)) \frac{2 \mathcal{C}(\omega) \pi\left(x^{\gamma}\right)}{17.41^2 \log x}\left(\frac{\lambda \gamma}{\xi} \int_{\frac{1}{17.41}}^{\frac{1}{7}} \int_{t_1}^{\frac{1}{6}(1-t_1)} \int_{t_2}^{\frac{1}{5}(1-t_1-t_2)} \int_{t_3}^{\frac{1}{4}(1-t_1-t_2-t_3)} \right. \\
\nonumber & \qquad \qquad \qquad \qquad \qquad \qquad \int_{t_4}^{\frac{1}{3}(1-t_1-t_2-t_3-t_4)} \int_{t_5}^{\frac{1}{2}(1-t_1-t_2-t_3-t_4-t_5)} \\
\nonumber & \left. \qquad \qquad \qquad \qquad \qquad \qquad \frac{1}{t_1 t_2 t_3 t_4 t_5 t_6 (1-t_1-t_2-t_3-t_4-t_5-t_6)} d t_6 d t_5 d t_4 d t_3 d t_2 d t_1 \right) \\
\leqslant&\ (1+o(1)) \frac{2 \mathcal{C}(\omega) \pi\left(x^{\gamma}\right)}{17.41^2 \log x}\left(\frac{\lambda \gamma}{\xi} 0.02571 \right)
\end{align}
and
\begin{align}
\nonumber W_2 =&\ \sum_{\substack{ a \in \mathcal{A} \\ \left(a, P\left(x^{\frac{1}{17.41}}\right) \right)=1 \\ \Omega(a) = 6 \\ \mu(a) \neq 0 }} \left(1 - \lambda \sum_{\substack{ x^{\frac{1}{17.41}} \leqslant p < x^{\frac{1}{u}} \\ p \mid a }}\left( 1-\frac{u \log p}{\log x} \right) \right) \\
\nonumber \leqslant&\ \lambda \sum_{\substack{ a \in \mathcal{A} \\ \left(a, P\left(x^{\frac{1}{17.41}}\right) \right)=1 \\ \Omega(a) = 6 \\ \mu(a) \neq 0 }} 1 \\
\nonumber \leqslant&\ (1+o(1)) \frac{2 \mathcal{C}(\omega) \pi\left(x^{\gamma}\right)}{17.41^2 \log x}\left(\frac{\lambda \gamma}{\xi} \int_{\frac{1}{17.41}}^{\frac{1}{6}} \int_{t_1}^{\frac{1}{5}(1-t_1)} \int_{t_2}^{\frac{1}{4}(1-t_1-t_2)} \int_{t_3}^{\frac{1}{3}(1-t_1-t_2-t_3)} \int_{t_4}^{\frac{1}{2}(1-t_1-t_2-t_3-t_4)} \right. \\
\nonumber & \left. \qquad \qquad \qquad \qquad \qquad \qquad \frac{1}{t_1 t_2 t_3 t_4 t_5 (1-t_1-t_2-t_3-t_4-t_5)} d t_5 d t_4 d t_3 d t_2 d t_1 \right) \\
\leqslant&\ (1+o(1)) \frac{2 \mathcal{C}(\omega) \pi\left(x^{\gamma}\right)}{17.41^2 \log x}\left(\frac{\lambda \gamma}{\xi} 0.16688 \right).
\end{align}

Finally, combining (6)--(10) we get that
\begin{align}
\nonumber W_1 >&\ W\left(\mathcal{A}, x^{\frac{1}{17.41}} \right) - W_2 - W_3 - W_4 + O\left(x^{1-10^{-10}}\right) \\
\nonumber \geqslant&\ (1+o(1)) \frac{2 \mathcal{C}(\omega) \pi\left(x^{\gamma}\right)}{17.41^2 \log x} \left(\frac{\log (17.41\xi -1)}{\xi} - \lambda \int_{u}^{17.41}\frac{t-u}{t(t \xi -1)} d t \right. \\
\nonumber & \left. \qquad \qquad \qquad \qquad \qquad \qquad - \frac{\lambda \gamma}{\xi} (0.00259+0.02571+0.16688) \right) + O\left(x^{1-10^{-10}}\right).
\end{align}
For $0.9985 < \gamma < 1$, we know that
\begin{equation}
\frac{\log (17.41\xi -1)}{\xi} - \lambda \int_{u}^{17.41}\frac{t-u}{t(t \xi -1)} d t - \frac{\lambda \gamma}{\xi} (0.00259+0.02571+0.16688) > 0.004
\end{equation}
and the proof of Theorem~\ref{t1} is completed.

We remark that the same method fails to prove $[p^{1/\gamma}] = P_4$ because of the following two restrictions:

1. We cannot use the estimation of exponential sums [\cite{XueLiZhang}, Lemma 5.1] to handle the error term occurred since we can not obtain a result of Lemma~\ref{l22}--type with 5 variables. An obvious counterexample is that each one of $p_1, p_2, p_3, p_4, p_5$ has size around $x^{0.2}$.

2. Even if we can enlarge the corresponding Type--II range in  so that a result of Lemma~\ref{l22}--type with 5 variables can be obtained, we cannot get a positive lower bound for
$$
\sum_{\substack{ a \in \mathcal{A} \\ \left(a, P\left(x^{\frac{1}{17.41}}\right) \right)=1 \\ \Omega(a) \leqslant 4 }} \mathscr{W}_a
$$
using Richert's logarithmic sieve weight.

It seems that the limit of our method is to prove $[p^{1/\gamma}] = P_5$.

\bibliographystyle{plain}
\bibliography{bib}

\end{document}